\newtheorem{theorem}{Theorem}[section]
\newtheorem{claim}[theorem]{Claim}
\newtheorem{thm}[theorem]{Theorem}
\def\qed{\hfill \ifhmode\unskip\nobreak\fi\quad\ifmmode\Box\else$\Box$\fi\\ }
\newcommand\abs[1]{\lvert #1\rvert}
\begin{document}
\title{On the Erd\H os-Ko-Rado  Theorem and the Bollob\'{a}s  Theorem \\ for $t$-intersecting families}

\author{Dong Yeap Kang, Jaehoon Kim, and Younjin Kim
 }
\address[Dong Yeap Kang]{Department of Mathematical Sciences, KAIST, 291 Daehak-ro Yuseong-gu Daejeon, 305-701 South Korea}
\address[Jaehoon Kim]{Department of Mathematical Sciences, KAIST, 291 Daehak-ro Yuseong-gu Daejeon, 305-701 South Korea / School of Mathematics, University of Birmingham, Edgbaston, Birmingham B15 2TT, United Kingdom}
\address[Younjin Kim]{Department of Mathematical Sciences, KAIST, 291 Daehak-ro Yuseong-gu Daejeon, 305-701 South Korea}
\email{dynamical@kaist.ac.kr}
\email{kimjs@bham.ac.uk, mutualteon@gmail.com}
\email{younjin@kaist.ac.kr}

\thanks{ The first author and second author were partially supported by Basic Science Research Program through the National Research Foundation of Korea(NRF) funded by the Ministry of
Science, ICT \& Future Planning (2011-0011653). \\
The second author was also partially supported by the European Research Council under the European Union's Seventh Framework Programme (FP/2007--2013) / ERC Grant Agreements no. 306349 (J.~Kim). \\
The third author was supported by Basic Science Research Program through the National Research Foundation of Korea(NRF) funded by the Ministry of
Science, ICT \& Future Planning (2013R1A1A3010982) (Y.~Kim).\\
Corresponding author: Younjin Kim}

\date{\today}
\begin{abstract}
A family $\mathcal{F}$ is $t$-$\it{intersecting}$ if any two members have at least $t$ common elements. Erd\H os, Ko, and Rado~\cite{EKR} proved that the maximum size of a $t$-intersecting family of subsets of size $k$ is equal to $ {{n-t} \choose {k-t}}$ if $n\geq n_0(k,t)$.
Alon, Aydinian, and Huang~\cite{ALON} considered families generalizing intersecting families, and proved the same bound. In this paper, we give a strengthening of their result by considering families generalizing $t$-intersecting families for all $t \geq 1$.
 In 2004, Talbot~\cite{TAL} generalized Bollob\'{a}s's Two Families Theorem~\cite{BOL} to $t$-intersecting families. In this paper, we proved a slight generalization of Talbot's result by using the probabilistic method.
 \end{abstract}
 
\maketitle

\section {Introduction}
Let $[n]$ be the set $\{ 1, \ldots, n \} $  and  $t$ be a positive integer.
A family $\mathcal{F}$ of subsets of $[n] $ is $t$-intersecting if $|F_i \cap F_j | \geq t $ for every pair of two subsets $F_i,F_j \in \mathcal{F}$. A family $\mathcal{F}$ of subsets of $[n] $ is $k$-uniform if it is a collection of $k$-subsets of $[n]$, and we also say that it has rank $k$ if the largest set in it has size $k$. Erd\H os, Ko, and Rado~\cite{EKR} proved that there exists $n_0(k,t)$ such that if $n \geq n_0(k,t)$, then the maximum size of a $k$-uniform $t$-intersecting family of subsets of $[n]$ is  ${{n-t} \choose {k-t}}$. The following generalization of Erd\H os, Ko, and Rado (EKR) Theorem was proved by Frankl~\cite{FR78} for $t \geq 15$, and  was completed by Wilson~\cite{WIL} for all $t$ by obtaining the smallest $n$ for the theorem to be true. \\

\begin{thm} [Frankl~\cite{FR78}, Wilson~\cite{WIL}]
If $\mathcal{F}$ is a $k$-uniform $t$-intersecting family of subsets of $[n]$, then we have
$$ |\mathcal{F}| \leq {{n-t} \choose {k-t}}$$ whenever $n \geq (k-t+1)(t+1)$.\\

\end{thm}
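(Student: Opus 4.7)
The plan is to combine Frankl's shifting technique with an inductive analysis of shifted families. First I would apply each shift operator $S_{ij}$ for $1\le i<j\le n$ until $\mathcal{F}$ is stable: $S_{ij}$ replaces any $F\in\mathcal{F}$ with $j\in F$ and $i\notin F$ by $(F\setminus\{j\})\cup\{i\}$ whenever the latter is not already in $\mathcal{F}$. A routine check confirms that $S_{ij}$ preserves $k$-uniformity, the $t$-intersecting property, and $|\mathcal{F}|$, so we may assume $\mathcal{F}$ is shifted (left-compressed).

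Next I would decompose with respect to the largest element: set $\mathcal{F}_0=\{F\in\mathcal{F}:n\notin F\}$ and $\mathcal{F}_1=\{F\setminus\{n\}:n\in F\in\mathcal{F}\}$, regarded as families on $[n-1]$ of uniformity $k$ and $k-1$ respectively. Shiftedness forces $\mathcal{F}_0$ to be $t$-intersecting, $\mathcal{F}_1$ to be $(t-1)$-intersecting, and, more strongly, every pair $F_0\in\mathcal{F}_0$, $F_1\in\mathcal{F}_1$ still satisfies $|F_0\cap F_1|\ge t$ (because $F_1\cup\{n\}\in\mathcal{F}$ shares at least $t$ elements with $F_0$, and $n\notin F_0$).

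With this structure I would induct on $k$ at fixed $t$, targeting the Pascal identity $\binom{n-t}{k-t}=\binom{n-1-t}{k-t}+\binom{n-1-t}{(k-1)-t}$. The inductive bound applied to $\mathcal{F}_0$ (after checking that the threshold transfers to $n-1$) yields $|\mathcal{F}_0|\le\binom{n-1-t}{k-t}$, so the task reduces to proving $|\mathcal{F}_1|\le\binom{n-1-t}{(k-1)-t}$. The naive bound from $(t-1)$-intersection alone is $\binom{n-t}{k-t}$, which is far too weak; one must leverage the cross-intersection condition $|F_0\cap F_1|\ge t$ to tighten it, essentially forcing $\mathcal{F}_1$ to behave like a $t$-intersecting family whenever $\mathcal{F}_0$ is nonempty. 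The base cases $k=t$ (trivial) and $k=t+1$ (a direct count) launch the induction.

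The principal obstacle is hitting the sharp threshold $n\ge(k-t+1)(t+1)$ rather than a weaker $n\ge n_0(k,t)$. Shifting plus induction readily yields an EKR-type bound for $n$ sufficiently large, but the exact constant is delicate: Frankl's original argument required $t\ge 15$, and Wilson's completion for every $t$ abandoned shifting in favour of an eigenvalue analysis of the Johnson scheme. My pragmatic plan is therefore to obtain the qualitative bound by shifting and then to upgrade to the stated threshold either by refining the case analysis to rule out the competing Frankl families $\{F:|F\cap[t+2i]|\ge t+i\}$, or by importing the algebraic input of Wilson, namely a bound on the smallest eigenvalue of the matrix indexed by $\binom{[n]}{k}$ whose $(A,B)$-entry is a carefully chosen function of $|A\cap B|$.
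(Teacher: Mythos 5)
The paper does not prove this statement at all --- it is quoted as a known theorem of Frankl and Wilson --- so there is no internal proof to compare you against; your attempt has to stand on its own. Judged that way, it is a programme rather than a proof, and the two places where you defer the work are exactly the two places where the theorem is hard. First, after the (correct) shifting step and the decomposition by the last element, you reduce the problem to showing $|\mathcal{F}_1|\le\binom{n-1-t}{(k-1)-t}$, you correctly observe that the $(t-1)$-intersecting property of $\mathcal{F}_1$ alone is far too weak, and you then say one ``must leverage'' the cross-intersecting condition $|F_0\cap F_1|\ge t$ --- but you never actually do so. This is precisely the step that makes the theorem nontrivial: for $t=1$ the classical induction closes easily, but for general $t$ no short argument of this shape is known, which is why Frankl's shifting proof required a much more elaborate analysis (and worked only for $t\ge 15$) and why Wilson had to abandon shifting entirely. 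Second, the induction on $k$ is not available as stated: the hypothesis $n\ge(k-t+1)(t+1)$ does not yield $n-1\ge(k-t+1)(t+1)$, so the inductive bound for $\mathcal{F}_0$ on the ground set $[n-1]$ fails exactly at the boundary case $n=(k-t+1)(t+1)$, which is the only case one cannot dispose of by a separate induction on $n$. Your own closing paragraph concedes both difficulties and proposes to fix them either by ruling out the Frankl families $\{F:|F\cap[t+2i]|\ge t+i\}$ by hand or by ``importing'' Wilson's eigenvalue bound for the Johnson scheme; the latter is not a repair of your argument but its wholesale replacement by a different and genuinely difficult proof. So the proposal identifies the right reduction and names the right obstacles, but it proves neither of the two statements it reduces the theorem to, and should be counted as containing a genuine gap.
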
 

There is also an EKR-type theorem for $t$-intersecting families with bounded rank: if $n \geq (k-t+1)(t+1)$, then the maximum size of a $t$-intersecting family of subsets of sizes at most $k$ is equal to $ {{n-t} \choose {k-t}} + {{n-t} \choose {k-1-t}} + \cdots + {{n-t} \choose 0}$. 
 In this paper we give the following strengthening of this theorem.

\begin{thm}\label{thm:t_intersecting}
Let $n, k, t$ be integers such that $n \geq (k-t+1)(t+1)$.
Let a collection of sets $\mathcal{F} \subseteq {{[n]}\choose {\leq k}}$ satisfies the following.
Suppose that for any $A,B \in \mathcal{F}$ with $|A\cap B| <t$, $|A\triangle B| \leq k-t$ holds.
Then  we have $$|\mathcal{F}|\leq {{n-t} \choose {k-t}} + {{n-t} \choose {k-1-t}} + \cdots + {{n-t} \choose 0} .$$ 
\end{thm}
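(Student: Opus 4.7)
My plan is to reduce Theorem~\ref{thm:t_intersecting} to the Frankl-Wilson theorem (Theorem~1.1) via a padding construction. Introduce $k$ new elements $y_1,\dots,y_k$ disjoint from $[n]$ and define an injective map $\phi\colon\mathcal{F}\to\binom{[n+k]}{k}$ by $\phi(A)=A\cup\{y_1,\dots,y_{k-|A|}\}$, padding each $A$ to size exactly $k$ with an initial segment of the $y_i$'s. The image $\phi(\mathcal{F})$ has a rigid \emph{prefix} structure: $F\cap\{y_1,\dots,y_k\}$ is always an initial segment of the padding.

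The central verification is that $\phi(\mathcal{F})$ is $t$-intersecting. For $A,B\in\mathcal{F}$ with $|A|\ge|B|$, the shared padding is $\{y_1,\dots,y_{k-|A|}\}$, so $|\phi(A)\cap\phi(B)|=|A\cap B|+(k-|A|)$. If $|A\cap B|\ge t$ this is already at least $t$. If $|A\cap B|<t$, the hypothesis gives $|A\triangle B|\le k-t$, hence $|A\setminus B|=|A|-|A\cap B|\le k-t$, so $|A\cap B|\ge|A|-(k-t)$, yielding $|\phi(A)\cap\phi(B)|\ge(|A|-(k-t))+(k-|A|)=t$. Since $n+k\ge(k-t+1)(t+1)$, Theorem~1.1 then gives $|\mathcal{F}|=|\phi(\mathcal{F})|\le\binom{n+k-t}{k-t}$.

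The main obstacle is that this raw Frankl-Wilson bound $\binom{n+k-t}{k-t}$ is in general strictly larger than the desired $\sum_{i=0}^{k-t}\binom{n-t}{i}$ (by Vandermonde, $\binom{n+k-t}{k-t}=\sum_{i=0}^{k-t}\binom{n-t}{i}\binom{k}{k-t-i}$), so a refinement exploiting the prefix structure is required. A direct count shows that for any $t$-set $T\subseteq[n+k]$ whose intersection with $\{y_1,\dots,y_k\}$ is itself a prefix of length $m$, the number of prefix-padded $k$-sets containing $T$ is exactly $\sum_{i=0}^{k-t}\binom{n-t+m}{i}$ (and is $0$ if $T\cap\{y_1,\dots,y_k\}$ fails to be a prefix). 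This is minimized at $m=0$, i.e.\ $T\subseteq[n]$, yielding precisely the target bound. The crux of the proof — and its main technical challenge — is to rule out the stars with $m>0$: using the full hypothesis on $\mathcal{F}$ (which is strictly stronger than the ``padded $t$-intersection'' exploited above), one should argue that if $\phi(\mathcal{F})$ were contained in such a star, the preimage would necessarily lie in $\binom{[n]}{\le k-t}$ and would force configurations of pairs $A,B$ with $|A\cap B|<t$ and $|A\triangle B|>k-t$, contradicting the hypothesis; hence the extremal star may be taken with $T\subseteq[n]$, and the desired bound follows.
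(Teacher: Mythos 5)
Your padding map $\phi$ and the verification that $\phi(\mathcal{F})\subseteq\binom{[n+k]}{k}$ is $t$-intersecting are correct, but the argument breaks down exactly where you flag the ``main technical challenge.'' Theorem~1.1 only bounds the \emph{size} of a $t$-intersecting family; it does not assert that the family is contained in a $t$-star $\{F: T\subseteq F\}$. A $t$-intersecting family need not lie in any star (for $t\ge 2$, consider $\{F\in\binom{[n+k]}{k}:|F\cap[t+2]|\ge t+1\}$), so your star-by-star count of prefix-padded sets containing a fixed $T$ never gets off the ground: there is no $T$ to which it applies. The concluding paragraph (``one should argue\dots would force configurations\dots'') is a statement of what you would need, not a proof of it, and the claim that a star with $m>0$ forces the preimage into $\binom{[n]}{\le k-t}$ is not substantiated. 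As it stands your method yields only $|\mathcal{F}|\le\binom{n+k-t}{k-t}$, which, as you note via Vandermonde, is strictly weaker than the required $\sum_{i=0}^{k-t}\binom{n-t}{i}$.

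The paper takes a different and self-contained route: it applies the up-shift $S_{i,k}$ (add $i$ to $A$ when $A\cup\{i\}\notin\mathcal{F}$ and $|A|<k$) repeatedly until $\mathcal{F}$ becomes an up-set of the same size. Claim~2.1 shows the hypothesis ``$|A\cap B|<t$ implies $|A\triangle B|\le k-t$'' is preserved by each shift, and Claim~2.2 shows that an up-set with this property is genuinely $t$-intersecting inside $[n]$; the bounded-rank EKR theorem for $t$-intersecting families (quoted in the introduction) then gives the bound directly. If you want to salvage your approach, you would need either a structural (Hilton--Milner/stability type) statement locating $t$-intersecting families inside stars --- which is false in the required generality --- or some other mechanism to recover the loss in $\binom{n+k-t}{k-t}$; the shifting argument is the standard way to avoid this loss.
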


\noindent Alon, Aydinian, and Huang~\cite{ALON} gave the following strengthening of the bounded rank EKR theorem when $t=1$:  $|\mathcal{F}|\leq {{n-t} \choose {k-t}} + {{n-t} \choose {k-1-t}} + \cdots + {{n-t} \choose 0}$ holds for all pairs $A,B$ with $|A\cap B|<1$ when $|A\triangle B| \leq k$, instead of $|A\triangle B|\leq k-1$. However, the condition $|A\triangle B| \leq k-t$ in Theorem \ref{thm:t_intersecting} cannot be replaced with $|A\triangle B| \leq k-t+1$ for $t\geq 2$ because of the following example.

\noindent For $t<k<n$, consider $\mathcal{F}=\{A\subseteq {{[n]}\choose {\leq k}} : [t]\subseteq A\} \cup \{A\subseteq [t]:|A|=t-1\}$. Then $\mathcal{F}$ contains ${{n-t} \choose {k-t}} + {{n-t} \choose {k-1-t}} + \cdots + {{n-t} \choose 0} + t$ sets while every two sets with $|A\cap B|\leq t-1$ satisfies $|A\triangle B| \leq k-t+1$. Thus our condition $|A\triangle B| \leq k-t$ is best possilbe when $t\geq 2$. \\

We also extend results regarding cross $t$-intersecting families in the same manner. We say that families $\mathcal{F}_1, \mathcal{F}_2, \cdots, \mathcal{F}_r$ of subsets of $[n]$ are cross $t$-intersecting if $|A\cap B| \geq t $ for every $A \in \mathcal{F}_i$ and $B \in \mathcal{F}_j$, where $i \neq j$. In 2013, Borg~\cite{BORG13} obtained the maximum product of sizes of cross $t$-intersecting families with bounded rank as follows. In this paper, we also show Theorem \ref{thm:cross}, a strengthening of Theorem \ref{thm:borg}.

\begin{thm}[Borg~\cite{BORG13}]\label{thm:borg}
For $ 1 \leq t \leq k \leq n $, there exists $n_0(k,t)$ such that for all $n \geq n_0(k,t)$ the maximum size of product of families $\mathcal{F}_i \subseteq {{[n]} \choose {\leq k_i}}$ for $1\leq i \leq r$ is equal to 
$$ \prod_{i=1}^{r}\left(\sum_{j=0}^{k_i-t} { {n-t} \choose {k_i-t-j}} \right) $$ when $\mathcal{F}_1, \mathcal{F}_2, \cdots, \mathcal{F}_r$ are cross $t$-intersecting families.

\end{thm}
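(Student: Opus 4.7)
The plan is to proceed by induction on the number of families $r$, with base case $r=1$ being the single-family bounded-rank $t$-intersecting EKR bound recalled just before Theorem~\ref{thm:t_intersecting} (which follows from the Frankl--Wilson theorem). I take $n_0(k,t)$ large enough so that $\sum_{j=0}^{k-t}\binom{n-t}{k-t-j}$ is dominated by its leading term $\binom{n-t}{k-t}$ and so that all lower-order error terms appearing below are absorbed.

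For the inductive step ($r\ge 2$), the first move is to apply the standard left-shifting operations $S_{ij}$ ($i<j$) simultaneously to all families $\mathcal{F}_1,\ldots,\mathcal{F}_r$. A classical observation of Frankl is that simultaneous shifting preserves the cross $t$-intersecting property and preserves each $|\mathcal{F}_i|$, so we may assume every $\mathcal{F}_i$ is left-compressed. In this compressed setting, if some $\mathcal{F}_j$ contains a set of size exactly $t$, shifting forces it to be $[t]$, and then cross $t$-intersection immediately forces $[t]\subseteq B$ for every $B\in\mathcal{F}_i$ with $i\ne j$. A symmetric propagation (using that another family already lives in the star at $[t]$ and has size tending to infinity with $n$) then shows that $\mathcal{F}_j$ itself is contained in the star at $[t]$, so every family satisfies $\mathcal{F}_i\subseteq \mathcal{S}_i:=\{A\in\binom{[n]}{\le k_i}:[t]\subseteq A\}$, from which the desired product bound $\prod_i|\mathcal{F}_i|\le\prod_i|\mathcal{S}_i|=\prod_i\sum_{j=0}^{k_i-t}\binom{n-t}{k_i-t-j}$ follows at once.

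The main obstacle is the complementary case in which no family contains a $t$-set, so every member of every family has size $\ge t+1$. Cross $t$-intersection alone does not force each $\mathcal{F}_i$ to be individually $t$-intersecting, hence the single-family EKR bound cannot be applied to a fixed $\mathcal{F}_i$ in isolation. I would resolve this via a stability argument: if the product is within a constant factor of the claimed maximum, then at least one $|\mathcal{F}_i|$ exceeds the single-family Hilton--Milner-type threshold, which for $n\ge n_0(k,t)$ forces the existence of a common $t$-element kernel $T$ contained in every member of $\mathcal{F}_i$; the cross condition then propagates $T$ into all other families, reducing matters to the star case above. If instead the product is already well below the claimed maximum, there is nothing to prove. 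The delicate technical point---and the reason $n_0(k,t)$ must be taken large---is quantifying this stability for bounded-rank (rather than uniform) families, where the count must be carried out separately for each set-size between $t$ and $k_i$ before summing.
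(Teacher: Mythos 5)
This theorem is not proved in the paper at all: it is imported verbatim from Borg~\cite{BORG13} and used as a black box in the proof of Theorem~\ref{thm:cross}, so there is no in-paper argument to compare against. Judged on its own, your sketch has genuine gaps at exactly the points where Borg's theorem is hard. First, the induction on $r$ is spurious: for $r=1$ the cross $t$-intersecting condition is vacuous (it only constrains pairs from \emph{distinct} families), so the base case is false as stated, and your ``inductive step'' never actually invokes the statement for $r-1$ families --- what you really give is a direct two-case analysis. Second, even your ``easy'' case is incomplete: after compression, a $t$-set in $\mathcal{F}_j$ forces $[t]\subseteq B$ for all $B$ in the \emph{other} families, but the back-propagation forcing $\mathcal{F}_j$ itself into the star at $[t]$ needs $\mathcal{F}_i$ to contain many sets whose parts outside $[t]$ are pairwise disjoint. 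Your parenthetical ``size tending to infinity with $n$'' is not available: the sizes $|\mathcal{F}_i|$ are the optimization variables, and an extremal configuration could a priori pair one tiny family with very large ones, which is precisely the imbalanced regime the product bound must rule out by explicit counting.

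The deeper problem is the stability step in the main case. You correctly observe that $\mathcal{F}_i$ need not be $t$-intersecting, but then you invoke a ``Hilton--Milner-type threshold'' to extract a common $t$-kernel --- Hilton--Milner-type results apply to $t$-intersecting families, so they do not apply to $\mathcal{F}_i$ as you use them; what is needed is a genuinely cross-intersecting stability lemma (if $\mathcal{F}_i$ has no $t$-kernel, then any family cross $t$-intersecting with it is substantially smaller than the star), combined with a trade-off computation over all $r$ families. Moreover, the quantitative dichotomy fails: from ``product within a constant factor of the maximum'' you can only conclude that some $|\mathcal{F}_i|$ is at least a constant fraction of the star size $\sum_{j=0}^{k_i-t}\binom{n-t}{k_i-t-j}$ (using that a fixed set $B$ in another family caps $|\mathcal{F}_i|$ by roughly $\binom{k_{j}}{t}\binom{n}{k_i-t}$), not within $1-o(1)$ of it --- so even a correct Hilton--Milner-type theorem would not trigger at your threshold. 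Closing these holes --- the kernel/no-kernel trade-off lemma and the bookkeeping across the bounded-rank layers of all $r$ families --- is the actual content of Borg's proof, and your proposal defers rather than supplies it.
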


\begin{thm}\label{thm:cross}
For $ 1 \leq t \leq k_1\leq k_2\leq \cdots \leq k_r \leq n$, there exists $n_0(k_{r-1},k_{r},t)$ such that the following holds for all $n\geq n_0(k_{r-1},k_{r},t)$. Let  $\mathcal{F}_1, \mathcal{F}_2, \cdots, \mathcal{F}_r$ be families of subsets of $[n]$ of size at most $k_1,k_2,\ldots,k_r$, respectively. 
 If every $A \in \mathcal{F}_i $ and  $B \in \mathcal{F}_j$ with $|A \cap B| <t $ satisfies $|A\triangle B| \leq \min\{k_i,k_j\} -t$, then we have $$ \prod_{i=1}^r |\mathcal{F}_i|  \leq \prod_{i=1}^{r}\left(\sum_{j=0}^{k_i-t} { {n-t} \choose {k_i-t-j}} \right).$$
\end{thm}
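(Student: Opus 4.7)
The plan is to establish Theorem~\ref{thm:cross} by a compression-based argument tailored to the weakened cross condition, reducing to a canonical configuration in which each family is a star at the $t$-set $[t]$, and then applying a counting estimate that makes use of the threshold $n\geq n_0(k_{r-1},k_r,t)$.

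First I would apply, for every pair $a\in[t]$ and $b\in[n]\setminus[t]$, the simultaneous $(a,b)$-compression $C_{ab}$ to every family $\mathcal{F}_1,\ldots,\mathcal{F}_r$; here $C_{ab}$ replaces a set $X$ containing $b$ but not $a$ by $(X\setminus\{b\})\cup\{a\}$ whenever the result is not already present in the family. Iterating until no further compression is possible yields families that are left-compressed toward $[t]$, while the cardinalities $|\mathcal{F}_i|$ and the size constraints $|F|\leq k_i$ are preserved. The critical lemma to establish is that $C_{ab}$ preserves the disjunctive cross condition: if $(A,B)\in\mathcal{F}_i\times\mathcal{F}_j$ with $i\neq j$ satisfies $|A\cap B|\geq t$ or $|A\triangle B|\leq\min\{k_i,k_j\}-t$, then so does the image pair $(C_{ab}A,\,C_{ab}B)$.

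Second, once the families are left-compressed, I would analyze the resulting structure. The relevant extremal family is the star $\mathcal{S}_i=\{A\subseteq[n]:[t]\subseteq A,\,|A|\leq k_i\}$ of size $S_i=\sum_{j=0}^{k_i-t}\binom{n-t}{k_i-t-j}$, and any sets in the compressed $\mathcal{F}_i$ missing some element of $[t]$ are tightly constrained by the cross conditions, especially those imposed by the two largest families $\mathcal{F}_{r-1}$ and $\mathcal{F}_r$. Using $n\geq n_0(k_{r-1},k_r,t)$, a direct count shows that the contribution of such ``rogue'' sets cannot push $|\mathcal{F}_i|$ above $S_i$, and hence $\prod_i|\mathcal{F}_i|\leq\prod_i S_i$. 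The dependence of $n_0$ on both $k_{r-1}$ and $k_r$ arises naturally here because the tightest of these rogue-set counts is governed by the cross conditions involving the two largest families.

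The main obstacle is the compression lemma. Preservation of the pure $t$-intersection clause under $(a,b)$-shifts is classical and underpins Borg's proof of Theorem~\ref{thm:borg}, but the symmetric-difference clause introduces new subcases: if compressing $A$ to $A'$ causes $|A'\cap B|$ to drop below $t$, one must verify that $|A'\triangle B|$ still lies within $\min\{k_i,k_j\}-t$. The analysis splits on whether $a$ and $b$ lie in $A$ and in $B$ and on whether the shift moves $B$ as well; in the delicate subcases where both sets are altered one must track the change in $|A\triangle B|$ carefully and occasionally swap the roles of the intersection and symmetric-difference clauses. Once this lemma is in hand, the rest of the argument is a counting step in which the threshold $n_0(k_{r-1},k_r,t)$ absorbs the error terms coming from sets outside the star.
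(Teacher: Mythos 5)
Your plan diverges from the paper's, and it has a genuine gap at the decisive step. The paper does not left-compress toward $[t]$ at all: it applies the ``up-set'' operation $S_{i,k}$ (adding an element to a set whenever the result is not already in the family and the size bound permits), proves in Claim~\ref{c1} that this operation preserves the hypothesis, and proves in Claim~\ref{c2} that once all the families are up-sets the hypothesis actually forces them to be genuinely cross $t$-intersecting. At that point Theorem~\ref{thm:borg} is invoked as a black box and the product bound follows immediately. In other words, the whole content of the paper's argument is a reduction of the weakened hypothesis to the literal hypothesis of Borg's theorem; the extremal counting is never redone.

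Your endgame, by contrast, never invokes Theorem~\ref{thm:borg} and instead tries to finish by a ``direct count'' showing that each compressed family satisfies $|\mathcal{F}_i|\leq S_i$. That intermediate claim is false: for cross $t$-intersecting families only the \emph{product} bound holds, not the individual bounds. For instance, take $\mathcal{F}_1=\{[k_1]\}$ and let $\mathcal{F}_2$ consist of all sets of size at most $k_2$ that $t$-intersect $[k_1]$; this configuration (which survives left-compression) has $|\mathcal{F}_2|>S_2$ while the product is still far below $\prod_i S_i$. Moreover, left-compressed cross $t$-intersecting families need not be stars nor close to stars (Frankl-type families $\{A:|A\cap[t+2i]|\geq t+i\}$ are left-compressed), so bounding the product after compression is precisely the hard content of Borg's theorem, which your one-sentence count does not supply. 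Your compression lemma itself is plausible --- the delicate subcase where $A$ moves but $B$ does not because its image is already present can be handled by comparing $A$ with that image and trading a loss of $2$ in the symmetric-difference clause against the hypothesis applied to the partner pair --- but even granting it, the argument does not close. The repair is either to cite Theorem~\ref{thm:borg} after establishing genuine cross $t$-intersection (which your left-compression does not establish), or to adopt the paper's up-set reduction, which gets you there directly.
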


Next, we consider $r$-wise intersecting families. We say that a family $\mathcal{F} \subseteq {{[n]} \choose k}$ is $r$-wise intersecting if $F_1 \cap F_2 \cap \cdots \cap F_r \neq \emptyset $ holds for all $F_i \in \mathcal{F}, 1 \leq i \leq r$. 
In the following Theorem~\ref{cor1}, we give a strengthening of the bounded-rank version of  Frankl's result~\cite{FR76} when $\mathcal{F}$ is an $r$-wise intersecting family.

\begin{thm}[Frankl~\cite{FR76}]\label{f}
Let $(r-1)n \geq rk$ and $\mathcal{F} \subseteq {{[n]}\choose { k}}$ be  an $r$-wise intersecting family. 
Then  we have $$|\mathcal{F}|\leq {{n-1}\choose {k-1}} .$$
\end{thm}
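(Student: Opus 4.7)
I would prove Theorem~\ref{f} via Katona's cyclic permutation method. Let $\Omega$ be the set of the $(n-1)!$ cyclic orderings of $[n]$, and call $F\in\mathcal{F}$ an $\omega$-arc if the $k$ elements of $F$ occupy $k$ consecutive positions in $\omega$. Each $F\in\mathcal{F}$ is an $\omega$-arc for exactly $k!(n-k)!$ cyclic orderings, so double counting pairs $(\omega,F)$ with $F$ an $\omega$-arc gives
\[
|\mathcal{F}|\cdot k!(n-k)! \;=\; \sum_{\omega\in\Omega}\bigl|\{F\in\mathcal{F}:F\text{ is an }\omega\text{-arc}\}\bigr|.
\]
Hence the bound $|\mathcal{F}|\le\binom{n-1}{k-1}$ would follow from showing that for every $\omega$, at most $k$ members of $\mathcal{F}$ are $\omega$-arcs.

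To prove this key claim, fix $\omega$, identify it with the cycle $C_n$, and let the $\omega$-arcs of $\mathcal{F}$ have starting positions $s_1<\cdots<s_m$ on $C_n$; assume toward contradiction that $m\ge k+1$. First, if $m\le r$, then (after padding to an $r$-tuple with further members of $\mathcal{F}$ if necessary) the $r$-wise intersecting property yields a common point $x$ of the $m$ arcs, and since at most $k$ length-$k$ arcs of $C_n$ contain any given point we get $m\le k$, contradicting $m\ge k+1$. So assume $m\ge r+1$. It now suffices to exhibit $r$ starts among $s_1,\ldots,s_m$ whose consecutive cyclic gaps are each at most $n-k$: the $r$ complementary arcs (each of length $n-k$) would then cover $C_n$, forcing the $r$ chosen length-$k$ arcs to have empty common intersection, contradicting $r$-wise intersecting. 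Existence of such a selection follows from a cyclic partition argument: the hypothesis $(r-1)n\ge rk$ rewrites as $r(n-k)\ge n$, while $m\ge k+1$ distinct integer starts on $C_n$ force every individual cyclic gap $g_i=s_{i+1}-s_i$ to be at most $n-k$; a cyclic balancing/greedy construction using both conditions then produces an $r$-partition of the $m$ gaps into consecutive blocks of total length at most $n-k$, and the block boundaries give the required $r$ starts.

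The main obstacle is this cyclic partition step: although the total-weight inequality $r(n-k)\ge n$ and the pointwise bound $g_i\le n-k$ make the partition plausible, one still needs the abundance $m\ge k+1$ to have enough cut candidates for the greedy/balancing argument to succeed (a naive linear greedy started at an arbitrary position can overshoot $r$ blocks, so choosing the correct cyclic starting point or performing a merging step is essential). Substituting the key claim $\bigl|\{F\in\mathcal{F}:F\text{ is an }\omega\text{-arc}\}\bigr|\le k$ back into the double count then yields $|\mathcal{F}|\le k\cdot (n-1)!/(k!(n-k)!)=\binom{n-1}{k-1}$.
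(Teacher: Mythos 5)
The paper does not actually prove Theorem~\ref{f}; it is quoted from Frankl~\cite{FR76} and used as a black box in the proof of Theorem~\ref{cor1}, so your argument has to stand on its own. Your overall strategy (Katona's cycle method) is a legitimate route: the double count is correct, the reduction of the bound to the claim that at most $k$ members of $\mathcal{F}$ are arcs of any fixed cyclic order is correct, and your reformulation of that claim is also correct --- namely, that among $m\ge k+1$ starts one must find $r$ whose consecutive cyclic gaps are all at most $n-k$, so that the $r$ complementary $(n-k)$-arcs cover $C_n$. The handling of $m\le r$ by padding with repetitions is fine, and the observation that $m\ge k+1$ forces every individual gap to be at most $n-(m-1)\le n-k$ is fine.

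The genuine gap is the cyclic partition step, which you yourself flag as ``the main obstacle'' and then resolve only by asserting that ``a cyclic balancing/greedy construction'' works. This step is the entire content of the lemma and it is delicate. The two conditions you cite as making it plausible --- each gap at most $L:=n-k$ and total weight $n\le rL$ --- are provably \emph{not} sufficient on their own: the cyclic gap sequence $(4,1,4,1)$ with $n=10$, $L=4$, $r=3$ satisfies both but admits no partition into $3$ consecutive blocks of sum at most $4$, since both two-gap blocks have sum $5$. So the abundance hypothesis $m\ge k+1$ (equivalently, total excess $n-m\le L-1$) must enter the combinatorial argument in an essential way, and you give no mechanism for it. The natural repairs all fail or require real work: a union bound over the $n$ translates of the ideal configuration $\{\theta,\theta+L,\dots,\theta+(r-1)L\}$ only succeeds when $rL-n<r$; a greedy walk from an arbitrary start provably advances only about $(r-2)L$ in $r-1$ steps in the worst case, short of the required $n-L\le (r-1)L$ by up to $L-1$; and you do not say how ``the correct cyclic starting point'' is found or why a ``merging step'' terminates with at most $r$ blocks. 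Until this selection lemma is actually proved, the argument establishes the theorem only in special cases (e.g., $r=2$, or $r(n-k)=n$, where a pigeonhole mod $n-k$ works), not in the stated generality.
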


\begin{thm}\label{cor1}
Let $(r-1)n \geq rk$ and $\mathcal{F} \subseteq {{[n]}\choose {\leq k}}$ satisfies the following.
Suppose that for any $F_1, F_2, \cdots, F_r \in \mathcal{F}$ if $ F_1\cap F_2\cap \cdots \cap F_r = \emptyset$, $|\cup_{i=1}^r F_i - \cap_{i=1}^rF_i| \leq k$ holds.
Then  we have $$|\mathcal{F}|\leq {{n-1}\choose {k-1}} + {{n-1}\choose {k-2}}+ \cdots {{n-1}\choose 0}.$$
\end{thm}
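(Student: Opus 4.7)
The plan is to stratify $\mathcal{F}$ by set size and reduce to Theorem~\ref{f} level by level. For $1 \leq j \leq k$, let $\mathcal{F}_j := \mathcal{F} \cap \binom{[n]}{j}$. The aim is to show $|\mathcal{F}_j| \leq \binom{n-1}{j-1}$ for each $j \geq 1$, together with $\emptyset \notin \mathcal{F}$; the desired inequality then follows by summing:
\[
|\mathcal{F}| = \sum_{j=1}^{k} |\mathcal{F}_j| \leq \sum_{j=1}^{k} \binom{n-1}{j-1} = \sum_{i=0}^{k-1} \binom{n-1}{i}.
\]
The case $\emptyset \in \mathcal{F}$ is handled separately: an $r$-tuple of copies of $\emptyset$ has empty intersection, which slots into the ``small-union'' regime below.

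The central step is to show each slice $\mathcal{F}_j$ is itself $r$-wise intersecting, which is immediate from the hypothesis combined with elementary inclusion-exclusion. If $F_1, \ldots, F_r \in \mathcal{F}_j$ satisfied $\bigcap_i F_i = \emptyset$, the hypothesis gives $|\bigcup_i F_i| \leq k$; on the other hand,
\[
\Bigl|\bigcap_i F_i\Bigr| \;\geq\; \sum_i |F_i| - (r-1)\Bigl|\bigcup_i F_i\Bigr| \;\geq\; rj - (r-1)k,
\]
which is positive whenever $j > (r-1)k/r$. For such ``large'' levels one simply applies Theorem~\ref{f} to $\mathcal{F}_j$ (its hypothesis $(r-1)n \geq rj$ follows from $(r-1)n \geq rk$) to obtain $|\mathcal{F}_j| \leq \binom{n-1}{j-1}$.

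For ``small'' levels $j \leq (r-1)k/r$ the inclusion-exclusion slack is insufficient; here I appeal to the global bounded-rank structure. If $\mathcal{F}$ possesses any bad $r$-tuple, its union $T$ satisfies $|T| \leq k$, and a swapping argument forces $\mathcal{F} \subseteq 2^T$: choose the bad tuple so that $|T|$ is maximal, pick $G \in \mathcal{F}$ with some $y \in G\setminus T$, and verify that substituting $G$ for a suitable $F_i$ preserves emptiness of the intersection (using minimality witnesses $x_i \in \bigcap_{l\neq i}F_l \setminus F_i$) while inflating the union to contain $y$, which would give $|T|+1 > k$ in contradiction to the hypothesis. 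Consequently $|\mathcal{F}_j| \leq \binom{|T|}{j} \leq \binom{k}{j}$, and summing over small and large levels is absorbed by $\sum_{i=0}^{k-1}\binom{n-1}{i}$ since $n$ is large by $(r-1)n \geq rk$; if instead $\mathcal{F}$ has no bad $r$-tuple at all, it is globally $r$-wise intersecting and Theorem~\ref{f} applies levelwise without further work.

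The hardest step is the swap argument in the small-level case: one must guarantee that replacing one set of the maximal bad tuple by $G$ keeps the intersection empty. The plan is to first pass to a minimal-with-maximal-union bad $r$-tuple, extracting witness elements $x_i \in \bigcap_{l\neq i}F_l \setminus F_i$; then whichever $x_i$ lies outside $G$ identifies an index $i$ for which the swap destroys the intersection while introducing $y$ into the union, completing the contradiction.
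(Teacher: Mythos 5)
Your overall strategy differs from the paper's: the paper first compresses $\mathcal{F}$ into an up-set $\mathcal{F}'$ of the same size using the shifting operators $S_{i,k}$, shows via Claims \ref{c1} and \ref{c2} that the resulting up-set is $r$-wise intersecting, and only then slices by level and applies Theorem \ref{f} to each slice. You skip the compression and try to prove the levelwise bound $|\mathcal{F}_j|\le\binom{n-1}{j-1}$ directly. That levelwise bound is simply false without compression, so the plan cannot be repaired locally. Take $r=2$, $k=4$, $n=8$ and $\mathcal{F}=\{\{1\},\{2\},\{1,2,3\}\}$: the only pair with empty intersection is $\{1\},\{2\}$, with $|\{1\}\triangle\{2\}|=2\le k$, so the hypothesis holds, yet $\mathcal{F}_1=\{\{1\},\{2\}\}$ is not intersecting and $|\mathcal{F}_1|=2>\binom{n-1}{0}=1$.

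The same example defeats the specific fallback you propose for small levels. The maximal bad pair has union $T=\{1,2\}$, but $\{1,2,3\}\in\mathcal{F}$ is not contained in $T$, so the structural claim $\mathcal{F}\subseteq 2^T$ is false; and indeed your swap produces no contradiction here because both witnesses $x_1=2$ and $x_2=1$ lie inside $G=\{1,2,3\}$, so no index $i$ makes the swapped tuple bad. (Separately, even when a swap does yield a bad tuple, its union is $G\cup\bigcup_{l\ne i}F_l$, which need not contain $F_i$, so you cannot conclude its size is $|T|+1>k$.) Finally, the closing step ``absorbed by $\sum_{i=0}^{k-1}\binom{n-1}{i}$ since $n$ is large'' is not available: the hypothesis $(r-1)n\ge rk$ permits $n$ as small as $k+1$ when $r$ is large, in which case $\sum_{i=0}^{k-1}\binom{n-1}{i}=2^k-1$ while $|2^T|$ can be as large as $2^k$. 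Your large-level inclusion--exclusion observation is correct as far as it goes, but the essential missing idea is the compression to an up-set, which is precisely what guarantees that every level is $r$-wise intersecting before Theorem \ref{f} is invoked.
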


The following Bollob\'{a}s's Two Families Theorem~\cite{BOL} is an important and well-known result in Extremal Set theory.  This theorem has been generalized in many directions. In 1982, Frankl~\cite{FR82} proved a skew version of Theorem~\ref{thm:bollobas}. The further generalizations of  Theorem~\ref{thm:bollobas}  were given by F\"{u}redi~\cite{FUREDI},  Lov\'{a}sz~\cite{LOVA}, and Talbot~\cite{TAL}.
In 2004, Talbot~\cite{TAL}  generalized Bollob\'{a}s's two families theorem~\cite{BOL} to $t$-intersecting families. We prove Theorem~\ref{thm:improve}, an improvement of Talbot's result, by using the probabilistic method. It also becomes to reprove Talbot's Theorem~\ref{thm:talbot}.  Note that the conditions $(c')$ in Theorem~\ref{thm:improve} is weaker than the condition $(c)$ of Theorem \ref{thm:talbot}.\\

\begin{thm}[Bollob\'{a}s~\cite{BOL}]\label{thm:bollobas}
Let $\mathcal{F}= \{ (A_i, B_i ) : i \in I\}$ be a finite collection of pairs of finite sets such that $A_i \cap B_j = \emptyset$ $\Longleftrightarrow$ $i=j$. Then we have
$$ \sum_{i \in I} {{|A_i|+|B_i|} \choose {|A_i|}}^{-1} \leq 1.$$
\end{thm}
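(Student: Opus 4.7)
\medskip

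\noindent \textbf{Proof proposal for Theorem~\ref{thm:bollobas}.} The plan is to run the classical probabilistic argument due to Katona: embed all the pairs in a common ground set, sample a uniformly random total order on that ground set, and exhibit, for each pair $(A_i,B_i)$, an event whose probability equals $\binom{|A_i|+|B_i|}{|A_i|}^{-1}$, so that disjointness of the events gives the bound.

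First I would fix a finite ground set $X$ containing $\bigcup_{i\in I}(A_i\cup B_i)$ and let $\sigma$ be a uniformly random linear ordering of $X$. For each $i\in I$ define the event
\[
E_i=\bigl\{\text{every element of } A_i \text{ precedes every element of } B_i \text{ in } \sigma\bigr\}.
\]
Conditioning on the set of positions occupied by $A_i\cup B_i$, the induced order on those $|A_i|+|B_i|$ positions is uniformly random, and the event $E_i$ depends only on that induced order. So
\[
\Pr[E_i]=\frac{|A_i|!\,|B_i|!}{(|A_i|+|B_i|)!}=\binom{|A_i|+|B_i|}{|A_i|}^{-1},
\]
and I would note that this computation does not involve $|X|$.

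The central step is to show that the events $\{E_i\}_{i\in I}$ are pairwise disjoint. Suppose for contradiction that $E_i\cap E_j\neq\emptyset$ with $i\neq j$. Since $A_i\cap B_j\neq\emptyset$ pick $x\in A_i\cap B_j$, and since $A_j\cap B_i\neq\emptyset$ pick $y\in A_j\cap B_i$. From $E_i$, because $x\in A_i$ and $y\in B_i$, we have $x$ preceding $y$; from $E_j$, because $y\in A_j$ and $x\in B_j$, we have $y$ preceding $x$. This contradiction forces disjointness, and this is the one step that genuinely uses the hypothesis $A_i\cap B_j=\emptyset\Leftrightarrow i=j$. Summing the probabilities gives
\[
\sum_{i\in I}\binom{|A_i|+|B_i|}{|A_i|}^{-1}=\sum_{i\in I}\Pr[E_i]=\Pr\Bigl[\bigcup_{i\in I}E_i\Bigr]\leq 1,
\]
which is the desired inequality.

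The main obstacle is essentially bookkeeping rather than mathematical: one must verify that $\Pr[E_i]$ really does not depend on $|X|$ (handled above by conditioning on positions) and that the disjointness argument uses only the crucial elements $x\in A_i\cap B_j$ and $y\in A_j\cap B_i$ guaranteed by the hypothesis, not any structural assumption on the sizes of the sets. Both points are transparent, which is why the argument extends cleanly to the many weighted, skew and $t$-intersecting variants that appear later in the paper.
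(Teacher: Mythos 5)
Your proof is correct, and it is essentially the same random-permutation argument the paper uses: the paper does not prove Theorem~\ref{thm:bollobas} separately (it is cited from Bollob\'as), but the proof of Theorems~\ref{thm:talbot} and~\ref{thm:improve} in Section~3 is exactly this Katona-style argument with ``properly separating'' permutations, which reduces to your events $E_i$ and your disjointness argument in the case $t=0$, $A_i\cap B_i=\emptyset$. Both the probability computation $\Pr[E_i]=\binom{|A_i|+|B_i|}{|A_i|}^{-1}$ and the pairwise-disjointness step via $x\in A_i\cap B_j$, $y\in A_j\cap B_i$ are handled correctly.
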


\begin{thm}[Talbot~\cite{TAL}] \label{thm:talbot}
Let $\mathcal{F}= \{ (A_i, B_i ) : i \in I\}$ be a finite collection of pairs of finite sets and $t$ be a nonnegative integer such that
 \begin{align*} & (a) \  \abs{A_i \cap B_i} \leq t \ \ {\text \ for \ each
\ }  i \in I
\\
 & (b) \   \abs{A_i \cap B_j} \geq t  \ \ {\text \ for
\ } i, j \in I  {\text \ and \ } i \neq j
 \\
 & (c) \  {\text If \ } A_i \cap B_i = A_j \cap B_j { \text  \ for \  } i \neq j \ \
{\text \ then \ } A_i \cap B_j \neq A_i \cap B_i \neq A_j \cap B_i.
 \end{align*}
Then we have
\begin{eqnarray*}
\sum_{i \in I}{\binom{|A_i \cup B_i |}{|A_i - B_i |}^{-1} \binom{|B_i |}{|A_i \cap B_i |}^{-1}} \leq 1.
\end{eqnarray*}
\end{thm}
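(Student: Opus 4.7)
The plan is to reprove Talbot's theorem by adapting the classical probabilistic proof of the Bollob\'as two-families theorem. Set $V=\bigcup_{i\in I}(A_i\cup B_i)$ and let $\pi$ be a uniformly random linear order of $V$. For each $i\in I$, let $E_i$ be the event that the restriction of $\pi$ to $A_i\cup B_i$ arranges the elements in three consecutive blocks, in the order $A_i\setminus B_i$, then $A_i\cap B_i$, then $B_i\setminus A_i$. Counting the linear orders of $A_i\cup B_i$ with this block structure yields
\[
\Pr[E_i]=\frac{|A_i\setminus B_i|!\,|A_i\cap B_i|!\,|B_i\setminus A_i|!}{|A_i\cup B_i|!}=\binom{|A_i\cup B_i|}{|A_i\setminus B_i|}^{-1}\binom{|B_i|}{|A_i\cap B_i|}^{-1},
\]
which is exactly the summand in the desired inequality. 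Hence it suffices to show that the events $(E_i)_{i\in I}$ are pairwise disjoint, because then $\sum_{i\in I}\Pr[E_i]=\Pr\bigl(\bigcup_{i\in I}E_i\bigr)\le 1$.

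The entire work is therefore to prove $E_i\cap E_j=\emptyset$ for $i\ne j$. Abbreviate $C_k=A_k\cap B_k$ and suppose both $E_i$ and $E_j$ hold. I first dispatch the generic case: if some $x\in(A_i\cap B_j)\setminus(C_i\cup C_j)$ exists, then $x\in A_i\setminus B_i$ sits in the first block of $E_i$ while $x\in B_j\setminus A_j$ sits in the third block of $E_j$; for any $y\in A_j\cap B_i$ (nonempty by condition (b) when $t\ge 1$) the block of $E_i$ containing $y$ comes after that of $x$ while in $E_j$ the block containing $y$ comes before that of $x$, which simultaneously forces $\pi(x)<\pi(y)$ and $\pi(y)<\pi(x)$, a contradiction. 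The symmetric case in which some $y\in(A_j\cap B_i)\setminus(C_i\cup C_j)$ exists is handled identically.

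The main obstacle is the degenerate case in which both $A_i\cap B_j$ and $A_j\cap B_i$ are contained in $C_i\cup C_j$. A short inclusion-exclusion on the parameters $|C_i\cap B_j|$, $|C_j\cap A_i|$, $|C_j\cap B_i|$, $|C_i\cap A_j|$, and $\delta=|C_i\cap C_j|$, combined with conditions (a) and (b), forces $|C_i|=|C_j|=t$, $C_i\subseteq A_j\cup B_j$, $C_j\subseteq A_i\cup B_i$, and rigidly determines all four parameters in terms of $\delta$. A case split then finishes the argument: if $|C_i\cap B_j|>\delta$, a pair of elements lying in the middle block of one event and in the last block of the other yields contradictory inequalities between the minimum positions of the third blocks of $E_i$ and $E_j$; if $|C_i\cap B_j|=\delta<t$, an analogous argument using the first blocks gives contradictory inequalities between the maximum positions of the first blocks; and if $|C_i\cap B_j|=\delta=t$, then $C_i=C_j$ and the degenerate hypothesis together with $|A_i\cap B_j|\ge t=|C_i|$ forces $A_i\cap B_j=C_i$, which is precisely what condition (c) forbids. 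Thus $E_i\cap E_j=\emptyset$ in every case, and the theorem follows; in particular the Bollob\'as theorem drops out as the $t=0$ specialisation.
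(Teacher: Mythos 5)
Your probabilistic framework is exactly the paper's: a uniformly random linear order on $\bigcup_i (A_i\cup B_i)$, the event $E_i$ that $A_i\setminus B_i$, $A_i\cap B_i$, $B_i\setminus A_i$ appear as three ordered blocks within $A_i\cup B_i$, the same computation $\Pr[E_i]=\binom{|A_i\cup B_i|}{|A_i\setminus B_i|}^{-1}\binom{|B_i|}{|A_i\cap B_i|}^{-1}$, and the reduction to pairwise disjointness of the $E_i$. The only divergence is in how disjointness is proved. The paper works with the extremal positions $i_b=\min\{s : x_s\in B_i\}$, $i_a=\max\{s : x_s\in A_i\}$ and their analogues for $j$, assumes without loss of generality $j_b\le i_b$, and in a few lines forces $A_j\cap B_i=A_j\cap B_j$, then $i_b=j_b$, then $A_i\cap B_j=A_i\cap B_i$, and finally (comparing $i_a$ with $j_a$) that all four intersections coincide, contradicting $(c)$ (in fact the weaker $(c')$ of Theorem~\ref{thm:improve}). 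Your route --- splitting on whether some element of a cross-intersection escapes $C_i\cup C_j$, and in the degenerate case running an inclusion--exclusion count before a three-way case split --- reaches the same contradiction but is noticeably longer; the paper's min/max bookkeeping is the more economical way to organize the same information.

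Two repairs are needed before your version is complete. First, in the generic case you justify the nonemptiness of $A_j\cap B_i$ ``by condition $(b)$ when $t\ge 1$''; the theorem allows $t=0$, where $(b)$ is vacuous, so there you must instead invoke $(c)$: with $C_i=C_j=\emptyset$, if $A_j\cap B_i=\emptyset$ then $A_j\cap B_i=A_i\cap B_i$, which $(c)$ forbids. (The paper isolates $t=0$ as a separate, one-line case for exactly this reason.) Second, the equality analysis in your degenerate case does not ``rigidly determine all four parameters in terms of $\delta$'': it yields $|C_i\cap B_j|=|C_j\cap B_i|=\beta$ and $|C_j\cap A_i|=|C_i\cap A_j|=t+\delta-\beta$ with $\beta$ still free, which is precisely why your subsequent split on $\beta>\delta$, $\beta=\delta<t$, $\beta=\delta=t$ is needed; with that split carried out the argument does close, but the counting step should be written out rather than asserted.
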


\begin{thm} \label{thm:improve} \label{thm:improve} When $t\geq 1$, even if we replace the condition $(c)$  in Theorem~\ref{thm:talbot} with the following weaker condition
\begin{align*}
&(c') \  {\text If \ } A_i \cap B_i = A_j \cap B_j { \text  \ for \  } i \neq j \ \
{\text \ then \ } A_i \cap B_j = A_i \cap B_i = A_j \cap B_i \text{ does not hold} \end{align*}
we still have
\begin{eqnarray*}
\sum_{i \in I}{\binom{|A_i \cup B_i |}{|A_i - B_i |}^{-1} \binom{|B_i |}{|A_i \cap B_i |}^{-1}} \leq 1.
\end{eqnarray*}

\end{thm}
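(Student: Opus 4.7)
The plan is to mirror the random-permutation argument that proves the standard Bollob\'as theorem. Let $X=\bigcup_{i\in I}(A_i\cup B_i)$ and let $\pi\colon X\to [|X|]$ be a uniformly random bijection. For each $i\in I$, I would let $E_i$ be the event that, in the ordering on $A_i\cup B_i$ induced by $\pi$, every element of $A_i\setminus B_i$ precedes every element of $A_i\cap B_i$, which in turn precedes every element of $B_i\setminus A_i$. A direct count gives
\[
\Pr(E_i)=\frac{|A_i\setminus B_i|!\,|A_i\cap B_i|!\,|B_i\setminus A_i|!}{|A_i\cup B_i|!}=\binom{|A_i\cup B_i|}{|A_i-B_i|}^{-1}\binom{|B_i|}{|A_i\cap B_i|}^{-1},
\]
so it suffices to show that the events $\{E_i\}_{i\in I}$ are pairwise disjoint and then apply the union bound.

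The useful consequence of $E_i$ is that for any $x\in A_i$ and $y\in B_i$ with $x\neq y$, either $\pi(x)<\pi(y)$ or $\{x,y\}\subseteq A_i\cap B_i$. Suppose, for contradiction, that $E_i\cap E_j\neq\emptyset$ for some $i\neq j$. Using condition $(b)$ and $t\geq 1$, choose $x\in A_i\cap B_j$ and $y\in A_j\cap B_i$. When $x\neq y$, applying the observation to $E_i$ (with $x\in A_i,\ y\in B_i$) and to $E_j$ (with $y\in A_j,\ x\in B_j$) shows that $\pi(x)<\pi(y)$ and $\pi(y)<\pi(x)$ cannot both hold, so one of the ``unless'' clauses must fire; that is, $\{x,y\}\subseteq A_i\cap B_i$ or $\{x,y\}\subseteq A_j\cap B_j$. (The degenerate case $x=y$ places $x$ in $A_i\cap B_i\cap A_j\cap B_j$ automatically.)

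Writing $D_i=A_i\cap B_i$, $D_j=A_j\cap B_j$, $I_{ij}=A_i\cap B_j$, $I_{ji}=A_j\cap B_i$, the previous step says that every pair $(x,y)\in I_{ij}\times I_{ji}$ with $x\neq y$ lies entirely in $D_i$ or entirely in $D_j$. Combining this with the tight cardinality budget $|I_{ij}|,|I_{ji}|\geq t$ from $(b)$ and $|D_i|,|D_j|\leq t$ from $(a)$, a short case analysis drives every configuration to the same collapse: if some $x_0\in I_{ij}\setminus D_i$ exists, then pairing $x_0$ with each $y\in I_{ji}$ forces $I_{ji}\subseteq D_j$ and hence $I_{ji}=D_j$ with $|D_j|=t$; pursuing the symmetric argument, the only consistent outcome is $D_i=D_j$ together with $I_{ij}=I_{ji}=D_i$. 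This is exactly the configuration ruled out by $(c')$, and the contradiction follows.

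The main obstacle I expect is verifying that every branch of the case analysis really collapses all the way to the quadruple equality $D_i=D_j=I_{ij}=I_{ji}$, rather than stopping at a weaker conclusion such as $D_i=D_j$ or $I_{ij}=D_i$ alone; it is precisely this tight collapse that permits the relaxation of Talbot's condition $(c)$ to the weaker $(c')$.
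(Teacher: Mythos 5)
Your proposal is correct and is essentially the paper's own proof: the paper uses the same uniformly random permutation, the same ``properly separating'' events with the same probability $\binom{|A_i \cup B_i|}{|A_i - B_i|}^{-1}\binom{|B_i|}{|A_i\cap B_i|}^{-1}$, and the same union bound via pairwise disjointness, differing only in that it derives the forbidden configuration by tracking the extremal positions $\min\{s : x_s \in B_i\}$ and $\max\{s : x_s \in A_i\}$ rather than by your pairwise element comparison. The case analysis you flag as the main obstacle does close: any $x_0 \in (A_i\cap B_j)\setminus(A_i\cap B_i)$ satisfies $x_0 \notin A_j\cap B_i$ and forces $(A_j\cap B_i)\cup\{x_0\}\subseteq A_j\cap B_j$, so $|A_j\cap B_j|\geq t+1$ contradicts $(a)$; hence $A_i\cap B_j = A_i\cap B_i$ and $A_j\cap B_i = A_j\cap B_j$, each of size exactly $t$, and if these two $t$-sets differed, a pair $x\in (A_i\cap B_i)\setminus(A_j\cap B_j)$, $y\in (A_j\cap B_j)\setminus(A_i\cap B_i)$ would violate your dichotomy, yielding the full collapse excluded by $(c')$.
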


\noindent Note that the above conditions in Theorem \ref{thm:improve} are sharp in the sense that we cannot replace $(c)$ with $(c')$ when $t=0$ without changing the conclusion because of the following theorem.\\

\begin{thm}(Kir\'{a}ly, Nagy, P\'{a}lv\H{o}lgyi and Visontai \cite{KNPV}) \label{examp}
Let $a,b$ be two relatively prime integers. There exists a finite collection of pairs of finite sets $\mathcal{F}= \{ (A_i, B_i ) : |A_i|=a, |B_i|=b,i \in I\}$ with $A_i\cap B_i = \emptyset$ for all $i\in I$. Moreover, $ A_i \cap B_j = A_i \cap B_i = A_j \cap B_i =\emptyset\text{ does not hold }$ for any $i\neq j$ and
 $$\sum_{i \in I}{\binom{|A_i \cup B_i |}{|A_i - B_i |}^{-1} \binom{|B_i |}{|A_i \cap B_i |}^{-1}} = |I|\binom{a+b}{a}^{-1} \geq 2-\frac{1}{a+b}.$$

\end{thm}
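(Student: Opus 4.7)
The plan is to exhibit an explicit family of pairs with the claimed properties. I begin with two structural observations that any valid family must satisfy: if $A_i=A_j$ for some $i\neq j$, then $A_i\cap B_j=A_j\cap B_j=\emptyset$ and $A_j\cap B_i=A_i\cap B_i=\emptyset$, which is exactly the configuration ruled out by the statement; hence all $A_i$'s are distinct, and by symmetry all $B_i$'s are distinct. In particular the ground set must be strictly larger than $a+b$.

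For the construction itself, I use cyclic shift orbits on $\mathbb{Z}_N$ for a suitable $N$. Fix disjoint subsets $D,E\subseteq\mathbb{Z}_N$ with $|D|=a$ and $|E|=b$, and put $(A_i,B_i):=(D+i,\,E+i)$ for $i\in\mathbb{Z}_N$. Then $A_i\cap B_i=\emptyset$ is automatic, and for $i\neq j$ the intersection $A_i\cap B_j$ is nonempty iff the shift $j-i$ lies in the difference set $\Delta:=E-D=\{e-d:d\in D,\,e\in E\}\subseteq\mathbb{Z}_N$. Thus the $(c')$-violation requirement on a single orbit becomes the purely arithmetic statement $\Delta\cup(-\Delta)\supseteq\mathbb{Z}_N\setminus\{0\}$.

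The case $a=1$ shows the idea in pure form: take $N=2b+1$, $D=\{0\}$, and $E=\{1,2,\ldots,b\}$; then $\Delta=\{1,\ldots,b\}$ and $-\Delta=\{b+1,\ldots,2b\}$ partition $\mathbb{Z}_{2b+1}\setminus\{0\}$, so $|I|=N=2(a+b)-1$ pairs work and give the sum equal to $(2(a+b)-1)/(a+b)=2-\tfrac{1}{a+b}$ with equality. For general coprime $a,b$, a single orbit still produces only $N=2(a+b)-1$ pairs, whereas the target is $\bigl(2(a+b)-1\bigr)\cdot\binom{a+b}{a}/(a+b)$ pairs. The factor $\binom{a+b}{a}/(a+b)$ is an integer exactly because $\gcd(a,a+b)=\gcd(a,b)=1$. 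The natural remedy is therefore to take the disjoint union of $\binom{a+b}{a}/(a+b)$ carefully chosen cyclic orbits on the common ground set $\mathbb{Z}_{2(a+b)-1}$, using arithmetic-progression-style shape pairs $(D^{(\ell)},E^{(\ell)})$ whose combined difference structure satisfies both the within-orbit covering property and the cross-orbit version: for every $\ell\neq\ell'$ and every $s\in\mathbb{Z}_N$ (including $s=0$, which corresponds to pairs indexed by the same shift in different orbits), at least one of $s\in E^{(\ell')}-D^{(\ell)}$ or $-s\in E^{(\ell)}-D^{(\ell')}$ holds.

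The main obstacle is this cross-orbit compatibility — a simultaneous system of symmetric-covering conditions on the entire family of difference sets $\{E^{(\ell)}-D^{(\ell')}\}$. Here the coprimality hypothesis $\gcd(a,b)=1$ is essential in two ways: it forces $\binom{a+b}{a}/(a+b)$ to be an integer so that the orbit count matches the target, and it provides enough algebraic flexibility (via the multiplicative action of $\mathbb{Z}_{2(a+b)-1}^{\times}$ on shapes, or, equivalently, via choosing $D^{(\ell)}$ to be appropriately dilated copies of a base set) to realize the required $\binom{a+b}{a}/(a+b)$ compatible shape pairs without collisions. Once the shapes are produced, the count is immediate: $|I|=(2(a+b)-1)\cdot\binom{a+b}{a}/(a+b)=\bigl(2-\tfrac{1}{a+b}\bigr)\binom{a+b}{a}$, so $|I|\binom{a+b}{a}^{-1}\geq 2-\tfrac{1}{a+b}$, as required.
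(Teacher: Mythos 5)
The paper you were asked to match contains no proof of Theorem~\ref{examp}: it is quoted verbatim from \cite{KNPV}. So your proposal stands or falls on its own completeness, and it is not complete. Your preliminary reductions are correct and match the known framework: all $A_i$ (and all $B_i$) must be distinct; for a single cyclic orbit $(D+i,E+i)_{i\in\mathbb{Z}_N}$ the weak cross-intersection requirement is exactly the covering condition $\Delta\cup(-\Delta)\supseteq\mathbb{Z}_N\setminus\{0\}$ for $\Delta=E-D$ (your statement has a harmless sign slip: $A_i\cap B_j\neq\emptyset$ iff $i-j\in\Delta$); the case $a=1$ with $N=2b+1$, $D=\{0\}$, $E=\{1,\dots,b\}$ is valid; and the arithmetic $(2(a+b)-1)\cdot\binom{a+b}{a}/(a+b)=\bigl(2-\tfrac{1}{a+b}\bigr)\binom{a+b}{a}$, with $\binom{a+b}{a}/(a+b)$ an integer when $\gcd(a,b)=1$, is right. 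But for general coprime $a,b$ the entire content of the theorem is the existence of $\binom{a+b}{a}/(a+b)$ shape pairs $(D^{(\ell)},E^{(\ell)})$ on $\mathbb{Z}_{2(a+b)-1}$ satisfying your within-orbit and cross-orbit covering conditions, and you never construct them: you correctly name this as ``the main obstacle'' and then assert that coprimality ``provides enough algebraic flexibility,'' which is a restatement of the goal, not an argument.

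Worse, the one concrete mechanism you offer --- taking the shapes to be dilates $uD$, $uE$ of a base pair under the multiplicative action of $\mathbb{Z}_{2(a+b)-1}^{\times}$ --- provably cannot supply enough shapes. Dilation by a unit does preserve the within-orbit condition, since $u\Delta\cup(-u\Delta)=u\bigl(\Delta\cup(-\Delta)\bigr)$, but a fixed base pair has at most $\varphi(2(a+b)-1)\leq 2(a+b)-2$ dilates, while the number of shapes required, $\binom{a+b}{a}/(a+b)$, grows exponentially in $a+b$: already for $a=5$, $b=6$ you need $\binom{11}{5}/11=42$ shapes but $\abs{\mathbb{Z}_{21}^{\times}}=12$. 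And even where the count is not yet an obstruction, no reason is given why distinct dilates should be pairwise cross-compatible. The construction behind the cited result handles exactly this point by combinatorics on words rather than dilation: each shape is a rotation of one of the $\binom{a+b}{a}/(a+b)$ cyclic $0$--$1$ words with $a$ zeros and $b$ ones, placed on an interval of length $a+b$ in $\mathbb{Z}_{2(a+b)-1}$; two placements violate the condition precisely when the two words agree on the entire overlap of their intervals, and coprimality enters because it makes every such cyclic word primitive, so each necklace admits an unbordered (Lyndon-type) rotation, from which the overlap-agreements can be excluded for all pairs of placements. Until you produce an explicit choice of the $\binom{a+b}{a}/(a+b)$ shapes together with a verification of all pairwise covering conditions (including the $s=0$ cross-orbit case), what you have is a correct reduction plus the solved case $a=1$, not a proof of the theorem.
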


\section{Proof of Theorems}
\noindent For a family $\mathcal{F}$ of subsets of $[n]$ of size at most $k$, we define  the transformation $S_{i,k}$ as follows.

  \begin{displaymath}
S_{i,k,\mathcal{F}}(A):=\left\{
\begin{array}{ll}
A\cup \{i\} & \mbox{ if $A\cup \{i\}\notin \mathcal{F}$ and $|A|<k$} \\
A & \mbox{otherwise}\\
\end{array}
 \right.
 \end{displaymath}

\noindent Also, let $S_{i,k}(\mathcal{F}) = \{ S_{i,k,\mathcal{F}}(A) : A\in \mathcal{F}\}$. \\

\begin{claim} \label{c1}
A tuple of families $(\mathcal{F}_1,\mathcal{F}_2,\cdots,\mathcal{F}_r)$ is given with $\mathcal{F}_j \subseteq {{[n]}\choose {\leq k_j}}$ for $j=1,\ldots,r$ and $k = \min_j\{ k_j\}$ satisfying the following 
\begin{equation} \label{1}
 \text{If } |\bigcap_{j=1}^{r} F_j| < t \text{ for } F_j \in \mathcal{F}_j, \text{ then } |\bigcup_{j=1}^{r} F_j - \bigcap_{j=1}^{r} F_j| \leq k-t \text{ holds. }
\end{equation}
Then, for $i\in [n]$, $(S_{i,k_1}(\mathcal{F}_1), S_{i,k_2}(\mathcal{F}_2), \cdots, S_{i,k_r}(\mathcal{F}_r))$ also satisfies (\ref{1}).
\end{claim}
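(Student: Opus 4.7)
The plan is to prove the claim by contradiction. Suppose some tuple $(F'_1,\ldots,F'_r)$ with each $F'_j \in S_{i,k_j}(\mathcal{F}_j)$ satisfies $|\bigcap_j F'_j|<t$ yet $|\bigcup_j F'_j - \bigcap_j F'_j| > k-t$. The goal is to produce $(\tilde F_1,\ldots,\tilde F_r)$ with each $\tilde F_j \in \mathcal{F}_j$ that witnesses the same failure, contradicting hypothesis (\ref{1}) for $(\mathcal{F}_j)$. For each $j$ I attach the unique shift-preimage $F_j \in \mathcal{F}_j$: either $F_j = F'_j$ (call $j$ \emph{type (a)}), or $F'_j = F_j \cup \{i\}$ with $i \notin F_j$, $|F_j|<k_j$, and $F_j\cup\{i\}\notin \mathcal{F}_j$ (\emph{type (b)}). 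If no index is type (b), then $(F'_j) \in \prod_j \mathcal{F}_j$ already and we are done; so assume some index is type (b), and partition $[r]$ into $J_{\mathrm{in},a}$, $J_{\mathrm{in},b}$, $J_{\mathrm{out}}$ according to whether $i \in F'_j$ and by type.

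The natural first attempt is $\tilde F_j := F_j$ for every $j$. Because $F_j$ and $F'_j$ differ, if at all, only at $i$, the intersections $\bigcap_j \tilde F_j$ and $\bigcap_j F'_j$ agree off $\{i\}$, and similarly for the unions; a short elementwise check on whether $i$ lies in each shows the violation transfers in every configuration except one. Specifically, if $J_{\mathrm{out}}=\emptyset$ (so $i \in \bigcap_j F'_j$), then $i$ leaves $\bigcap_j \tilde F_j$ but $\bigcup_j \tilde F_j$ either keeps $i$ or also loses it, so $|\bigcup_j \tilde F_j - \bigcap_j \tilde F_j|$ stays the same or grows by $1$. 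If $J_{\mathrm{out}}\neq\emptyset$ and $J_{\mathrm{in},a}\neq\emptyset$, then neither $\bigcap$ nor $\bigcup$ changes at $i$, so the violation is literal. Either way, $(\tilde F_j) \in \prod_j\mathcal{F}_j$ already contradicts (\ref{1}).

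The main obstacle, and the only delicate case, is the remaining configuration $J_{\mathrm{out}}\neq\emptyset$ and $J_{\mathrm{in},a}=\emptyset$ (so $J_{\mathrm{in}} = J_{\mathrm{in},b} \neq \emptyset$). Here $\tilde F_j := F_j$ strips $i$ from $\bigcup$ without removing it from $\bigcap$, and $|\bigcup_j \tilde F_j - \bigcap_j \tilde F_j|$ drops by exactly $1$, landing at $\geq k-t$ but not strictly above. I resolve this by splitting $J_{\mathrm{out}}$. If some $j_0 \in J_{\mathrm{out}}$ has $|F_{j_0}|<k_{j_0}$, then by the definition of $S_{i,k_{j_0},\mathcal{F}_{j_0}}$ one must have $F_{j_0}\cup\{i\}\in \mathcal{F}_{j_0}$; redefining $\tilde F_{j_0}:=F_{j_0}\cup\{i\}$ puts $i$ back into $\bigcup_j\tilde F_j$, while $J_{\mathrm{in},b}\neq\emptyset$ keeps $i$ out of $\bigcap_j\tilde F_j$, so the violation transfers. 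Otherwise every $j \in J_{\mathrm{out}}$ has $|F_j|=k_j\geq k$; keeping $\tilde F_j := F_j$ throughout and picking any $j^* \in J_{\mathrm{out}}$, the inclusion $\bigcap_j \tilde F_j \subseteq F_{j^*}$ combined with $|\bigcap_j \tilde F_j|=|\bigcap_j F'_j|\leq t-1$ yields
\[
\Bigl|\bigcup_j \tilde F_j - \bigcap_j \tilde F_j\Bigr| \;\geq\; |F_{j^*}| - \Bigl|\bigcap_j \tilde F_j\Bigr| \;\geq\; k_{j^*} - (t-1) \;\geq\; k - t + 1 \;>\; k - t,
\]
so $(\tilde F_j)$ by itself violates (\ref{1}) inside $\prod_j \mathcal{F}_j$, closing the contradiction.
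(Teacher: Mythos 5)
Your proposal is correct and follows essentially the same route as the paper's proof: pass to the shift-preimages, observe that the violating configuration transfers directly except when $i$ lies in no preimage but in some (not all) of the shifted sets, and in that remaining case either restore $i$ to one set of $J_{\mathrm{out}}$ using the definition of $S_{i,k_{j_0},\mathcal{F}_{j_0}}$ (which forces $F_{j_0}\cup\{i\}\in\mathcal{F}_{j_0}$) or derive a contradiction from a full-size set meeting an intersection of size less than $t$. Your organization by the membership pattern of $i$ (rather than by first identifying $\bigcup F'_j-\bigcap F'_j=\{i\}\cup(\bigcup F_j-\bigcap F_j)$ as the paper does) is only a cosmetic difference, and if anything your write-up handles the easy configurations more explicitly than the paper's.
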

\begin{proof}{
Suppose that we have $F_j\in \mathcal{F}_j$ such that  $F'_j=S_{i,k,\mathcal{F}_j}(F_j)$ with $|\bigcap_{j=1}^{r} F'_j|<t$ and $| \bigcup_{j=1}^{r} F'_j -\bigcap_{j=1}^{r} F'_j| > k-t$.
Since $\bigcup_{j=1}^{r} F_j -\bigcap_{j=1}^{r} F_j$ is a subset of $\bigcup_{j=1}^{r} F'_j -\bigcap_{j=1}^{r} F'_j$ with size at most $k-t$, we have $$\bigcup_{j=1}^{r} F'_j -\bigcap_{j=1}^{r} F'_j= \{i\}\cup \bigcup_{j=1}^{r} F_j -\bigcap_{j=1}^{r} F_j .$$ Hence $i$ does not belong to any of $F_j$ and belongs to some, not all of $F'_j$.
Assume that $i$ does not belong to $F'_1,\cdots,F'_l$ and belongs to $F'_{l+1},\cdots, F'_{r}$. 
\vspace{0.2cm}

\noindent {\bf {Case 1.} There exists $\mathbf{j}$ with $\mathbf{|F_j|\geq k}$.}

\vspace{0.2cm}

 \noindent Since $|\bigcap_{j=1}^{r} F_j|<t$, we have $$|\bigcup_{j=1}^{r} F_j - \bigcap_{j=1}^{r} F_j| \geq |F_j - \bigcap_{j=1}^{r} F_j |> k-t.$$ It is a contradiction. 
\vspace{0.2cm}

\noindent {\bf{Case 2.} $\mathbf{|F_j|<k}$ \bf{for all} $\mathbf{j}$.} 

\vspace{0.2cm}

\noindent For $j\geq l+1$,  we have $S_{i,k,\mathcal{F}}(F_j)=F'_j=F_j\cup \{i\}$ because of  $i\in F'_j$. For $j\leq l$, we also have $S_{i,k,\mathcal{F}}(F_j)=F'_j=F_j$ because of  $i\notin F'_j$. The condition $k\leq k_j$ implies that $F_j\cup \{i\}\in \mathcal{F}_j$ for $j\leq l$.
Then we have  

$\displaystyle| \bigcap_{j=1}^{r-1} F_j\cap (F_r \cup \{i\})|<t$ and $\displaystyle |(F_r\cup \{i\})\cup \bigcup_{j=1}^{r-1} F_j -  ( (F_r\cup\{i\})\cap \bigcap_{j=1}^{r-1}F_j)| =k-t+1.$  

\noindent It is a contradiction.
Hence such $F_j$s do not exist, and  $(S_{i,k_1}(\mathcal{F}_1), S_{i,k_2}(\mathcal{F}_2), \cdots, S_{i,k_r}(\mathcal{F}_r))$ also satisfies (\ref{1}).
}\end{proof}

\noindent We say  that a family $\mathcal{F} \subseteq {{[n]}\choose {\leq k}}$ is an up-set if $A\cup \{i\} \in\mathcal{F}$ for all $A$ and $i$ such that $A\in \mathcal{F} \cap {{[n]}\choose {\leq k-1}}$ and $i \notin A$.

\begin{claim} \label{c2}
A tuple of families $(\mathcal{F}_1,\mathcal{F}_2,\cdots,\mathcal{F}_r)$ is given with $\mathcal{F}_j \subseteq {{[n]}\choose {\leq k_j}}$ for $j=1,\ldots,r$ and $k = \min_j\{ k_j\}$. If it satisfies (\ref{1}) and $\mathcal{F}_j$s are up-sets for all $j$, then $|\bigcap_{j=1}^{r} F_j| \geq t$ holds for any $F_j\in \mathcal{F}_j$.
\end{claim}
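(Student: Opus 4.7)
The plan is to argue by contradiction and exploit the up-set property to enlarge a single family member into one that violates condition~(\ref{1}). Suppose for contradiction that we have $F_j\in \mathcal{F}_j$ for $j=1,\ldots,r$ such that $I:=\bigcap_{j=1}^{r} F_j$ has size $|I|<t$. Set $U:=\bigcup_{j=1}^{r} F_j$, $s:=|I|$, and $u:=|U\setminus I|$. Condition~(\ref{1}) immediately gives $u\leq k-t$, and combined with $s\leq t-1$ this forces $|U|=s+u\leq k-1\leq k_j-1$ for every $j$.

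The core step is to pick a set $Y\subseteq [n]\setminus U$ of size exactly $k-t-u+1$ (one checks quickly that the implicit bound $n\geq k$ leaves enough room, since $|[n]\setminus U|=n-s-u\geq k-t-u+1$), and then to enlarge only the first family member by setting $F_1':=F_1\cup Y$ and $F_j':=F_j$ for $j\geq 2$. Adding the elements of $Y$ to $F_1$ one at a time, every intermediate set has size at most $|F_1|+|Y|-1\leq (s+u)+(k-t-u+1)-1\leq k-1\leq k_1-1$, so the up-set hypothesis on $\mathcal{F}_1$ places each such set---and in particular $F_1'$---inside $\mathcal{F}_1$. Since $Y$ is disjoint from $U$, it is disjoint from each $F_j$ with $j\geq 2$, hence $\bigcap_{j=1}^{r} F_j'=I$ still has size $s<t$, whereas $\bigcup_{j=1}^{r} F_j'=U\cup Y$. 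Consequently
\[
\Bigl|\bigcup_{j=1}^{r} F_j' \setminus \bigcap_{j=1}^{r} F_j'\Bigr| \;=\; u+|Y| \;=\; k-t+1 \;>\; k-t,
\]
which contradicts condition~(\ref{1}) applied to the new tuple $(F_1',F_2',\ldots,F_r')$.

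The main obstacle is verifying that the enlargement $F_1\mapsto F_1\cup Y$ actually stays inside $\mathcal{F}_1$, and this is where the two hypotheses of the claim interlock: the equality $k=\min_j k_j$ supplies the bound $k\leq k_1$, while the inequality $u\leq k-t$ extracted from condition~(\ref{1}) keeps $|Y|$ small enough that every intermediate set respects the size cap $k_1-1$ required by the up-set rule. Everything else is direct set-theoretic bookkeeping, and once this single step is justified the contradiction lands immediately.
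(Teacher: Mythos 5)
Your proposal is correct and follows essentially the same strategy as the paper's proof: assume $|\bigcap F_j|<t$, use (\ref{1}) to bound $|\bigcup F_j|\leq k-1$, and then use the up-set property to enlarge $F_1$ by elements outside $\bigcup F_j$ until the symmetric-difference condition in (\ref{1}) is violated. The only cosmetic difference is that you add exactly $k-t-u+1$ new elements while the paper grows $F_1$ as far as possible (to size $k_1$ or to all of $[n]\setminus\bigcup F_j$); the bookkeeping in your version is, if anything, slightly cleaner.
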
\begin{proof}{
Suppose that we have $F_j \in \mathcal{F}_j$ for $j=1,2,\cdots,r$ with $|\bigcap_{j=1}^{r}F_j|<t$. By the condition, $| \bigcup_{j=1}^{r} F_j - \bigcap_{j=1}^{r} F_j|\leq k-t$ holds.
It means $|\bigcup_{j=1}^{r}F_j|<k-1$, so $[n]-\bigcup_{j=1}^{r} F_j$ is not empty because of $n\geq k$. Let $[n]-\bigcup_{j=1}^{r} F_j = \{i_1,i_2,\cdots,i_l\}$.
Since $\mathcal{F}_1$ is an up-set, $F_1\cup \{i_1\}$ is also in $\mathcal{F}_1$ unless $|F_1|=k_1$. By repeatedly adding this element $i_1, i_2,\ldots$ to $F_1$ until we cannot, we find a new set $F''_1 \in \mathcal{F}_1$ satisfying $|F''_1|=k_1$ or $F''_1 = F_1 \cup ([n]-\bigcup_{j=1}^{r} F_j)$.
In any case,  we have $$ \left| \left(F''_1\cup \bigcup_{j=2}^{r} F_j \right) - \left(F''_1\cap \bigcap_{j=2}^{r} F_j \right) \right|\geq \min\{k_1-(t-1), n-(t-1)\}\geq k-t+1.$$ It is a contradiction.
}\end{proof}

 \noindent For a family $\mathcal{F}\subseteq {{[n]}\choose{\leq k}}$, we let $\mathcal{F}^0=\mathcal{F}$ and take $\mathcal{F}^{i+1}=S_{n,k}(S_{n-1,k}(\cdots(S_{1,k}(\mathcal{F}^{i})\cdots)$ for  all $i \geq 0$. If $\mathcal{F}^{j+1} = \mathcal{F}^j$, then we get an up-set $\mathcal{F}^j$ and we let $\mathcal{F}'=\mathcal{F}^j$. We also have  $|\mathcal{F}|=|\mathcal{F}^i|$ for all $i \geq 0$
 since $S_{i,k}$ does not change the size of a family in ${{[n]}\choose {\leq k} }$.

\begin{proof}[Proof of Theorem \ref{thm:t_intersecting}]
{Let  $n \geq (k-t+1)(t+1)$.
We take $\mathcal{F}'$ as above which has the same size with $\mathcal{F}$ and which is an up-set. By  Claim \ref{c1} with $\mathcal{F}_1=\mathcal{F}_2=\mathcal{F}$ and $r=2$, we say that the up-set $\mathcal{F}'$ has the property (\ref{1}) as $\mathcal{F}$ does. Then, Claim \ref{c2} with $\mathcal{F}_1=\mathcal{F}_2=\mathcal{F}$ implies that the up-set $\mathcal{F}'$ is $t$-intersecting. Hence, we have that $$|\mathcal{F}|=|\mathcal{F}'| \leq \sum_{i=0}^{k-t} {{n-t}\choose {k-i-t}}.$$}\end{proof}

\begin{proof}[Proof of Theorem \ref{thm:cross}]
  Let  $ 1 \leq t \leq k_1\leq k_2\leq \cdots \leq k_r \leq n$  and $n \geq n_0(k_{r-1},k_r,t)$ with $n_0(k_{r-1},k_r,t)$ as in Theorem 1.2 in \cite{BORG13}. 
 For each  $\mathcal{F}_i \subseteq {{[n]}\choose {\leq k_i}}$, we define $\mathcal{F}'_i$ by repeatedly applying $S_{j,k_i}$ for all $j$ as above.
Then $\mathcal{F}'_1, \mathcal{F}'_2, \cdots, \mathcal{F}'_r$ are up-sets satisfying (\ref{1}) having the same size with $\mathcal{F}_1, \mathcal{F}_2, \cdots, \mathcal{F}_r$, respectively. By Claim \ref{c1},
we have $|A \triangle B| \leq \min\{k_i,k_j\} -t$ for any $A \in \mathcal{F}'_i , B \in \mathcal{F}'_j$ with $|A \cap B|  < t$. 
For all possible pair of two distinct numbers $j_1,j_2 \in [n]$, we apply Claim \ref{c2} with $\mathcal{F}'_{j_1},\mathcal{F}'_{j_2}$. Then we can conclude that $\mathcal{F}'_1, \mathcal{F}'_2,\cdots, \mathcal{F}'_r$ are cross $t$-intersecting. By Theorem \ref{thm:borg}, we have that $$\prod_{i=1}^{r}|\mathcal{F}_i|=\prod_{i=1}^{r}|\mathcal{F}'_i| \leq \prod_{i=1}^{r}\left(\sum_{j=0}^{k_i-t} { {n-t} \choose {k_i-t-j}} \right).$$
\end{proof}

\begin{proof}[Proof of Theorem \ref{cor1}]
Let  $n \geq (k-t+1)(t+1)$.
We take $\mathcal{F}'$ as above which has the same size with $\mathcal{F}$ and which is an up-set. By  Claim \ref{c1} with $\mathcal{F}_1=\mathcal{F}_2=\cdots =\mathcal{F}_r=\mathcal{F}$, we say that an up-set $\mathcal{F}'$ has the property (\ref{1}) with $t=1$ as $\mathcal{F}$ does. Then, Claim \ref{c2} with $\mathcal{F}_1=\mathcal{F}_2=\cdots=\mathcal{F}_r=\mathcal{F}$ implies that the up-set $\mathcal{F}'$ is intersecting. We let $\mathcal{F}'[i]$ denote the collection of all sets of size $i$ in $\mathcal{F}'$, then $\mathcal{F}'[i]$ is also intersecting. Hence, Theorem \ref{f} implies $|\mathcal{F}'[i]| \leq {{n-1}\choose{i-1}}$.

\noindent Hence, we conclude that $$|\mathcal{F}|=|\mathcal{F}'|=  \sum_{i=1}^{k} |\mathcal{F}'[i]| \leq \sum_{i=1}^{k} {{n-1}\choose {i-1}}.$$
\end{proof}

\section{A probablistic proof of Theorem \ref{thm:talbot} and Theorem \ref{thm:improve}}

In this section, we prove both Theorem \ref{thm:talbot} and Theorem \ref{thm:improve} in the probabilistic argument. Let $\mathcal{F}= \{(A_i,B_i): i\in I\}$ be a finite collection of pairs of finite sets satisfying the conditions on Theorems.  Let $[n] = \bigcup_{i \in I}{A_i \cup B_i}$ and $|I|=k$. 
A permutation $\sigma = (x_1 , x_2 , ... , x_n )$ on $[n]$ is \emph{properly separating} $(A_i , B_i)$, where $i \in I$, if $x_s \in A_i-B_i$ and $x_l \in B_i$ implies $s < l$ and $x_s \in A_i$ and $x_l \in B_i-A_i$ also implies $s< l$. Then all elements of $A_i - B_i$ should be on the left side of $B_i$, and all elements of $B_i - A_i$ should be on the right side of $A_i$ in the permutation $\sigma$ that properly separates $(A_i , B_i)$.

\begin{claim}
Any permutation $\sigma \in S_n$ does not properly separate both $(A_i , B_i)$ and $(A_j , B_j)$, $i \ne j$.
\end{claim}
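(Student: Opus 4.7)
The plan is to suppose for contradiction that some permutation $\sigma=(x_1,\ldots,x_n)$ properly separates both $(A_i,B_i)$ and $(A_j,B_j)$ with $i\neq j$, and derive a contradiction from conditions $(a)$, $(b)$, and $(c)$ (or the weaker $(c')$ of Theorem~\ref{thm:improve}). First, I would unpack the definition of proper separation into the structural statement that, restricted to $A_i\cup B_i$, the permutation $\sigma$ lists $A_i\setminus B_i$, then $A_i\cap B_i$, then $B_i\setminus A_i$. Writing $\pi_i$ for the position of the last element of $A_i\setminus B_i$ and $\rho_i$ for the position of the first element of $B_i\setminus A_i$ (with the natural boundary conventions), this yields $\pi_i<\rho_i$, with $A_i\cap B_i$ in positions strictly between $\pi_i$ and $\rho_i$, $A_i$ in positions $<\rho_i$, and $B_i$ in positions $>\pi_i$; analogous statements hold for $j$.

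The core step is to show $A_i\cap B_j=A_i\cap B_i$ and $A_j\cap B_i=A_j\cap B_j$, each of size $t$. If some $z\in A_i\cap B_j$ lies outside $A_i\cap B_i$, then $z\in A_i\setminus B_i$ forces $\text{pos}(z)\leq\pi_i$, while $z\in A_j\cap B_j$ (respectively $z\in B_j\setminus A_j$) forces $\pi_j<\pi_i$ (respectively $\rho_j\leq\pi_i$). The symmetric analysis on $A_j\cap B_i\setminus(A_j\cap B_j)$ yields $\pi_i<\pi_j$ or $\rho_i\leq\pi_j$. A direct check rules out each of the four resulting conjunctions against $\pi_i<\rho_i$ and $\pi_j<\rho_j$, so without loss of generality $A_i\cap B_j\subseteq A_i\cap B_i$; conditions $(a)$ and $(b)$ then force equality, of common size $t$. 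If moreover $A_j\cap B_i\not\subseteq A_j\cap B_j$, a similar positional argument (taking $w$ in the left-hand side uses the already-established equality to conclude $w\notin A_i$) gives $\rho_i\leq\pi_j$; but $A_i\cap B_i\subseteq(\pi_i,\rho_i)$ then sits strictly to the left of position $\pi_j$, while $A_i\cap B_j\subseteq B_j$ sits strictly to the right, contradicting their equality and nonemptiness for $t\geq 1$ (the $t=0$ branch of Theorem~\ref{thm:talbot} collapses here directly from $(c)$, since that condition forces the cross-intersection to be nonempty).

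An analogous argument on the right-hand anchors then yields $A_i\cap B_i=A_j\cap B_j$: an element $y\in (A_i\cap B_i)\setminus(A_j\cap B_j)$ must lie in $B_j\setminus A_j$ (using $A_i\cap B_j=A_i\cap B_i$), so $\text{pos}(y)\geq\rho_j$, while $y\in A_i$ gives $\text{pos}(y)<\rho_i$, forcing $\rho_j<\rho_i$; a mirror element on the other side would force $\rho_i<\rho_j$, so one containment holds and equality follows from equal size. The resulting chain $A_i\cap B_j=A_i\cap B_i=A_j\cap B_i$ together with $A_i\cap B_i=A_j\cap B_j$ is precisely what $(c)$ of Theorem~\ref{thm:talbot} and the weaker $(c')$ of Theorem~\ref{thm:improve} forbid, completing the proof. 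The main obstacle is the case-analytic bookkeeping: juggling the four anchor points $\pi_i,\rho_i,\pi_j,\rho_j$ through several reductions without conflating sets with their positions, and handling the $t=0$ branch of Talbot's theorem, where the nonemptiness needed for the positional contradictions has to come from $(c)$ rather than $(b)$.
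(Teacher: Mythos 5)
Your proof is correct and follows essentially the same strategy as the paper's: a positional analysis of the separating permutation forcing $A_i\cap B_j=A_i\cap B_i=A_j\cap B_i=A_j\cap B_j$, which contradicts $(c')$ (or, in the $t=0$ case, the nonemptiness guaranteed by $(c)$). The only difference is bookkeeping --- you anchor on the extreme positions of $A_i\setminus B_i$ and $B_i\setminus A_i$ and chase individual witness elements, whereas the paper anchors on $\max\{s : x_s\in A_i\}$ and $\min\{s : x_s\in B_i\}$ and reads the intersections off as positional slices.
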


\begin{proof}
Suppose $\sigma = (x_1 , x_2 , ... , x_n)$ properly separates both $(A_i , B_i)$ and $(A_j , B_j)$, where $i \neq j$. Note that $\max\{i | x_i\in A_i\} \leq \max\{i |x_i\in B_i\}$ by the definition of proper separation.

\noindent We define the following indices $$i_{b} = \min \left \{ s \: | \: x_s \in B_i \right \}, i_{a} = \max \left \{ s \: | \: x_s \in A_i \right \}, j_{b} = \min \left \{ l \: | \: x_l \in  B_j \right \}, j_{a} = \max \left \{ l \: | \: x_l \in A_j \right \}.$$ 
Without loss of generality, we may assume $j_{b} \leq i_{b}$. \\




\noindent {\bf {Case 1.} $t=0$.} 
\vspace{0.2cm}

\noindent Since $A_j\cap B_j =\emptyset$ and $\sigma$ properly separates $(A_j,B_j)$, we have $j_{a}<j_{b}$. Thus $j_{a} < j_{b} \leq i_{b}$, so we conclude that $A_j\cap B_i =\emptyset$. It is a contradiction to the condition $(c)$. \\
\vspace{0.2cm}

\noindent {\bf {Case 2.} $t\geq 1$.} 
\vspace{0.2cm}
\noindent Since $j_b \leq i_b$, we have
\begin{eqnarray*}
A_j \cap B_i & = \left \{ x_l \: | \: x_l \in A_j \:, \:    l\geq i_{b} \right \} \subseteq \left \{ x_l \: | \: x_l \in A_j \:, \:   l\geq j_{b} \right \} = A_j\cap B_j.
\end{eqnarray*}
This with the conditions $(a)$ and $(b)$ imply that 
$$ |A_j\cap B_i| =|A_j\cap B_j| =t \geq 1.$$
Since $A_j\cap B_j$ is nonempty, $x_{j_{b}} \in A_j\cap B_j = A_j\cap B_i\subseteq B_i$. By our choice, $i_b$ is the minimum index of $B_i$, we have  $i_{b}\leq j_{b}$. Then we conclude $i_{b}=j_{b}$. Hence,
 \begin{eqnarray*}
A_i \cap B_j  = \left \{ x_l \: | \: x_l \in A_i \:, \:   l\geq j_{b} \right \}=\left \{ x_l \: | \: x_l \in A_i\:, \: l\geq i_{b}   \right \} = A_i\cap B_i.
\end{eqnarray*}
Again by conditions $(a)$ and $(b)$, $A_i \cap B_j=A_i\cap B_i$ has size $t$. 
\vspace{0.2cm}

\noindent If $i_{a} \geq j_{a}$, we get
\begin{eqnarray*}
A_j \cap B_i & = \left \{ x_l \: | \: x_l \in B_i \: , \:  l \leq j_a \} \right \}\subseteq \left \{ x_l \: | \: x_l \in B_i \: , \:   l\leq i_a  \right \} = A_i\cap B_i.
\end{eqnarray*}
If $i_{a} \leq j_{a}$, we get
\begin{eqnarray*}
A_i \cap B_j & = \left \{ x_l \: | \: x_l \in B_i \: , \:  l \leq i_a \} \right \}\subseteq \left \{ x_l \: | \: x_l \in B_i \: , \:   l\leq j_a  \right \} = A_j\cap B_j.
\end{eqnarray*} 

\noindent 	One of $i_{a} \geq j_{a}$ and $i_{a} \leq j_{a}$ must hold. Then  either $A_j\cap B_i = A_i\cap B_i$ or $A_i\cap B_j = A_j\cap B_j$ holds, where $ i \neq j$. In any case, we conclude that $A_i\cap B_j = A_j\cap B_j = A_i\cap B_i = A_j\cap B_j$ for $t \geq 1$. It is a contradiction to the condition $(c')$. 
\end{proof}

\noindent Pick a permutation $\sigma \in S_n$ uniformly and independently, and let $E_i$ be the event that $\sigma$ properly separates $(A_i , B_i)$. Then the events $E_1 , E_2 , ... , E_k$ are mutually disjoint, and $\Pr[E_i ] = \binom{|A_i \cup B_i |}{|A_i - B_i |}^{-1} \binom{|B_i|}{|A_i \cap B_i |}^{-1}$ for $1 \leq i \leq k$. Then we conclude that

\begin{eqnarray*}
\Pr\left [\bigcup_{i=1}^{k}{E_i} \right ] = \sum_{i=1}^{k}{\Pr[E_i]} = \sum_{i=1}^{k}{\binom{|A_i \cup B_i |}{|A_i - B_i |}^{-1} \binom{|B_i |}{|A_i \cap B_i |}^{-1}} \leq 1
\end{eqnarray*}
as desired.

\section{Acknowledgement}
The authors are indebted to the anonymous referee who gave us valuable comments and pointed out a mistake in the former proof of Theorem \ref{thm:improve}.

\end{document}